\newcommand{\version}{Ver.~0.0}
\newcommand{\setversion}[1]{\renewcommand{\version}{Ver.~{#1}}}
\title [Closed orbits and double flag variety]
{Closed orbits on partial flag varieties and double flag variety of finite type}
\author[Kondo]{Kensuke Kondo}
\address{
Department of Physics and Mathematics\\
Aoyama Gakuin University\\
Fuchinobe 5-10-1, Chuo-ku, Sagamihara 252-5258, Japan}
\email{kenkenpa826@hotmail.co.jp}
\author[Nishiyama]{Kyo Nishiyama$^{\ast}$}
\address{
Department of Physics and Mathematics\\
Aoyama Gakuin University\\
Fuchinobe 5-10-1, Chuo-ku, Sagamihara 252-5258, Japan}
\email{kyo@gem.aoyama.ac.jp}
\thanks{$\ast)${\ } Supported by JSPS Grant-in-Aid for Scientific Research (B) \#{21340006}.}
\author[Ochiai]{Hiroyuki Ochiai$^{\dagger}$}
\address{
Institute of Mathematics-for-Industry\\
Kyushu University\\
744, Motooka, Nishi-ku, Fukuoka 819-0395, Japan}
\email{ochiai@imi.kyushu-u.ac.jp}
\thanks{$\dagger)${\ } Supported by JSPS Grant-in-Aid for Scientific Research (A) \#{19204011}.}
\author[Taniguchi]{Kenji Taniguchi}
\address{
Department of Physics and Mathematics\\
Aoyama Gakuin University\\
Fuchinobe 5-10-1, Chuo-ku, Sagamihara 252-5258, Japan}
\email{taniken@gem.aoyama.ac.jp}
\subjclass[2000]{Primary 14M15; Secondary 14M17, 14M27}
\keywords{Symmetric pair, multi-flag variety.}
\theoremstyle{plain}
\newtheorem{theorem}{Theorem}
\newtheorem{corollary}[theorem]{Corollary}
\newtheorem{lemma}[theorem]{Lemma}
\newtheorem{itheorem}{Theorem}
\theoremstyle{definition}
\newtheorem{condition}[theorem]{Condition}
\newtheorem{problem}[theorem]{\upshape Problem}
\theoremstyle{remark}
\newtheorem{remark}[theorem]{\upshape Remark}
\numberwithin{equation}{section}
\numberwithin{theorem}{section}
\newcommand{\bbK}{\mathbb{K}}
\newcommand{\bbG}{\mathbb{G}}
\newcommand{\C}{\mathbb{C}}
\newcommand{\bbP}{\mathbb{P}}
\newcommand{\bbQ}{\mathbb{Q}}
\newcommand{\lie}[1]{\mathfrak{#1}}
\newcounter{thmenum}
\newenvironment{thmenumerate}{%
\begin{list}{$(\thethmenum)$}{%
\usecounter{thmenum}
\setlength{\labelsep}{.5em}
\setlength{\labelwidth}{-7pt}
\setlength{\topsep}{0pt}
\setlength{\partopsep}{0pt}
\setlength{\parsep}{0pt}
\setlength{\leftmargin}{3pt}
\setlength{\rightmargin}{0pt}
\setlength{\itemindent}{\leftmargin}
\setlength{\itemsep}{0pt}
}}
{\end{list}}
\newcommand{\mycomment}[1]{} 
\newlength{\lengthcup}
\newcommand{\rank}{\qopname\relax o{rank}}
\newcommand{\Ad}{\qopname\relax o{Ad}}
\newcommand{\ad}{\mathop{\mathrm{ad}}\nolimits{}}
\newcommand{\orbit}{\mathbb{O}}
\newcommand{\calorbit}{\mathcal{O}}
\newcommand{\GFl}{\mathfrak{X}}
\newcommand{\KFl}{\mathcal{Z}}
\newcommand{\GKFl}[2]{\GFl_{#1}\times\KFl_{#2}}
\newcommand{\clko}[1]{\mathrm{Cl}_K({#1})}
\begin{document}

\begin{abstract} 

Let $ G $ be a connected reductive algebraic group over $ \C $. 
We denote by $ K = (G^{\theta})_{0} $ the identity component of 
the fixed points of an involutive automorphism $ \theta $ of $ G $. 
The pair $ (G, K) $ is called a symmetric pair.   

Let $Q$ be a parabolic subgroup of $K$. 
We want to find a pair of parabolic subgroups $P_{1}$, $P_{2}$ of $G$ such that 
(i) $P_{1} \cap P_{2}  = Q$ and 
(ii) $P_{1} P_{2}$ is dense in $G$. 
The main result of this article states that, for a simple group $G$, 
we can find such a pair if and only if $(G, K)$ is a Hermitian symmetric pair. 

The conditions (i) and (ii) yield to conclude that the $K$-orbit through 
the origin $(e P_{1}, e P_{2})$ of $G/P_{1} \times G/P_{2}$ is 
closed and it generates an open dense $G$-orbit on the product of partial flag variety. 
From this point of view, we also give a complete classification of closed $K$-orbits on 
$G/P_{1} \times G/P_{2}$. 

\end{abstract}

\maketitle

\section{Review on double flag varieties for $ G/K $}

Let $ G $ be a connected reductive algebraic group over the complex number field 
$ \C $, and $ \theta $ its (non-trivial) involutive automorphism.  
The subgroup whose elements are fixed by $ \theta $ is denoted by $G^{\theta}$. 
We put $K=(G^{\theta})_{0}$, the identity component of $G^{\theta}$, 
and call it a symmetric subgroup of $ G $.  
We denote the Lie algebra of $ G $ (respectively of $ K $) by $ \lie{g} $ (respectively $ \lie{k} $).  
In the following, we use the similar notation; 
for an algebraic group we use a Roman capital letter, and for its Lie algebra the corresponding German small letter.

For a parabolic subgroup $ P $ of $G$, we denote a partial flag variety 
consisting of all $ G $-conjugates of $ P $ by $ \GFl_P $.  
We also choose a $ \theta $-stable parabolic $ P' $ in $ G $, and put $ Q = K \cap P' $.  
Then $ Q $ is a parabolic subgroup of $ K $, and every parabolic subgroup of $ K $ can be obtained in this way.
We denote a partial flag variety $ K/Q $ by $ \KFl_Q $.  
The product $ \GKFl{P}{Q} $ is called a \emph{double flag variety for the symmetric pair} $ (G, K) $.  
If there are only finitely many $ K $-orbits on the product $ \GKFl{P}{Q} $, 
it is called of \emph{finite type}.


Let us choose three parabolic subgroups $ P_1, P_2 $ and $ P_3 $ of $ G $.  
If one considers $ \bbG = G \times G $ and an involution $ \theta(g_1, g_2) = (g_2, g_1) $ of $ \bbG $, 
the symmetric subgroup $ \bbK = (\bbG^{\theta})_{0} $ is just the diagonal subgroup 
$ \Delta(G) \subset \bbG $.  
Thus $ (\bbG, \bbK) $ is a symmetric pair.  
Then $ \bbP = (P_1, P_2) $ is a parabolic subgroup of $ \bbG $ and $ \bbQ = \Delta(P_3) $ a parabolic subgroup of $ \bbK $, and 
our double flag variety can be interpreted as 
\begin{equation*}
\bbG/ \bbP \times \bbK/ \bbQ
= (G \times G)/(P_1 \times P_2) \times \Delta(G)/\Delta(P_3)
\simeq \GFl_{P_1} \times \GFl_{P_2} \times \GFl_{P_3}
\end{equation*}
which is nothing but the triple flag variety.  
So our double flag variety is a natural generalization of triple flag varieties.
The triple flag variety $ \GFl_{P_1} \times \GFl_{P_2} \times \GFl_{P_3} $ is said to be of finite type if there are 
finitely many $ G $-orbits in it.  

Let us return to the double flag variety $ \GFl_{P} \times \KFl_{Q} $.
One of the interesting problems is to classify the double flag varieties of finite type.  
In \cite{NO.2011}, Nishiyama and Ochiai gave two efficient criterions for the finiteness of orbits using triple flag varieties.  
Both criterions reduce the finiteness of orbits to that for a certain triple flag varieties.
The first one is

\begin{itheorem}[{\cite[Theorem~3.1]{NO.2011}}]
\label{NO.theorem:triple.flag.to.double.flag}
Let $ P' $ be a $ \theta $-stable parabolic of $ G $ such that $ P' \cap K = Q $.  
If the number of $ G $-orbits on $ \GFl_{P} \times \GFl_{\theta(P)} \times \GFl_{P'} $ is finite, 
then there are only finitely many $ K $-orbits on the double flag variety $ \GKFl{P}{Q} $.
\end{itheorem}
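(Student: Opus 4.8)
The plan is to translate the finiteness statements on both sides into orbit counts for the parabolics $ P' $ and $ Q $ acting on flag varieties, and then to pass from the $ P' $-count to the $ Q $-count by a fixed-point argument for the involution $ \theta $. Write $ M = \GFl_P \times \GFl_{\theta(P)} $, on which $ G $ acts diagonally. First I would record the two elementary orbit correspondences coming from the identity $ K\backslash(X \times K/Q) \cong Q\backslash X $ (and its analogue for $ G $ and $ P' $), namely
\[
K\backslash\bigl(\GKFl{P}{Q}\bigr) \;\cong\; Q\backslash \GFl_P,
\qquad
G\backslash\bigl(M \times \GFl_{P'}\bigr) \;\cong\; P'\backslash M .
\]
Since $ M \times \GFl_{P'} = \GFl_P \times \GFl_{\theta(P)} \times \GFl_{P'} $ is exactly the triple flag variety in the hypothesis, the whole assertion reduces to the implication: if $ P' $ has finitely many orbits on $ M $, then $ Q $ has finitely many orbits on $ \GFl_P $.

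Next I would realize $ \GFl_P $ as a fixed-point subvariety of $ M $. Define $ \tilde\theta : M \to M $ by $ \tilde\theta(x,y) = (\theta(y),\theta(x)) $; this is a well-defined involution (using $ \theta(\theta(P)) = P $) satisfying $ \tilde\theta(g\cdot m) = \theta(g)\cdot\tilde\theta(m) $, so it covers $ \theta $ on $ G $. Its fixed locus is precisely the graph $ \Gamma = \{(x,\theta(x)) : x \in \GFl_P\} $, and the map $ \delta_\theta : \GFl_P \to M $, $ x \mapsto (x,\theta(x)) $, is an isomorphism of $ \GFl_P $ onto $ M^{\tilde\theta} $. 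Because $ \theta $ fixes $ Q \subseteq K\cap P' $ pointwise, $ \delta_\theta $ is $ Q $-equivariant for the restriction to $ Q $ of the diagonal $ P' $-action on $ M $. Hence $ Q\backslash\GFl_P \cong Q\backslash M^{\tilde\theta} $, and the problem becomes: $ P' $ acts on $ M $ with finitely many orbits $ \Rightarrow $ $ (P')^{\theta} $ acts on $ M^{\tilde\theta} $ with finitely many orbits.

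This last implication is the heart of the matter, and I would isolate it as a general lemma: if a linear algebraic group $ H $ over $ \C $ with an involution $ \sigma $ acts on a variety $ X $ with only finitely many orbits, and $ \sigma_X $ is an involution of $ X $ with $ \sigma_X(h\cdot x) = \sigma(h)\cdot\sigma_X(x) $, then $ H^{\sigma} $ has finitely many orbits on $ X^{\sigma_X} $. The argument is orbit-by-orbit and cohomological. Any $ H $-orbit $ \calorbit $ meeting $ X^{\sigma_X} $ is $ \sigma_X $-stable; fixing $ x_0 \in \calorbit^{\sigma_X} $ and writing an arbitrary $ x = h\cdot x_0 \in \calorbit^{\sigma_X} $, the relation $ \sigma_X(x) = x $ forces $ h^{-1}\sigma(h) \in \Stab_H(x_0) $, and $ h \mapsto h^{-1}\sigma(h) $ descends to an injection of $ H^{\sigma}\backslash\calorbit^{\sigma_X} $ into the pointed set $ \ker\bigl(H^{1}(\langle\sigma\rangle, \Stab_H(x_0)) \to H^{1}(\langle\sigma\rangle, H)\bigr) $. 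The finiteness of the nonabelian cohomology $ H^{1}(\langle\sigma\rangle, \Stab_H(x_0)) $ of an algebraic group under an order-two group is the key input; granting it, each $ \calorbit^{\sigma_X} $ breaks into finitely many $ H^{\sigma} $-orbits, and summing over the finitely many relevant orbits $ \calorbit $ proves the lemma.

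Finally I would apply the lemma with $ H = P' $, $ \sigma = \theta|_{P'} $, $ X = M $ and $ \sigma_X = \tilde\theta $, to conclude that $ (P')^{\theta} $ acts on $ M^{\tilde\theta}\cong\GFl_P $ with finitely many orbits. Since $ Q = K\cap P' $ is of finite index in $ (P')^{\theta} = G^{\theta}\cap P' $, it too has finitely many orbits there, and by the correspondences of the first step this is equivalent to finiteness of $ K $-orbits on $ \GKFl{P}{Q} $, as required. The step I expect to be the genuine obstacle is the cohomological finiteness underlying the lemma; everything else is formal manipulation of orbit spaces and of the graph of $ \theta $.
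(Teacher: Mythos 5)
First, a point of comparison that matters here: this paper does not prove the statement at all --- it is an imported result, quoted with attribution from \cite{NO.2011} (Theorem~3.1 there), so there is no internal proof to measure your attempt against; I can only judge it on its own merits and against the strategy of the cited source.

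On its merits, your argument is correct. The two orbit correspondences $K\backslash(\GFl_P\times\KFl_Q)\cong Q\backslash\GFl_P$ and $G\backslash(\GFl_P\times\GFl_{\theta(P)}\times\GFl_{P'})\cong P'\backslash(\GFl_P\times\GFl_{\theta(P)})$ are standard and correctly invoked; the involution $\tilde\theta(x,y)=(\theta(y),\theta(x))$ is well defined on $M$, covers $\theta$, and its fixed locus is exactly the graph of $\theta|_{\GFl_P}$; the cocycle computation is sound (one small point you leave implicit: $\Stab_H(x_0)$ is $\sigma$-stable precisely because $x_0$ is a fixed point, which is what makes $H^{1}(\langle\sigma\rangle,\Stab_H(x_0))$ meaningful); and the finite-index step $[(P')^{\theta}:Q]<\infty$ holds because $G^{\theta}$ has finitely many connected components. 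The one input you grant without proof --- finiteness of $H^{1}(\Z/2\Z,S)$ for a complex linear algebraic group $S$ with algebraic involution --- is a genuine theorem but a known one: in characteristic zero one reduces along a $\sigma$-stable Levi decomposition to the reductive case (Springer, Richardson: the set $\{a\in S: a\sigma(a)=e\}$ is a finite union of twisted-conjugation orbits) and the unipotent case, where $H^{1}$ vanishes. If you prefer to close that gap by elementary means, you can avoid cohomology altogether: since invariants of a finite group are exact in characteristic zero, for $x\in\calorbit^{\sigma_X}$ one has $\lie{h}^{\sigma}\cdot x=(T_x\calorbit)^{d\sigma_X}=T_x\calorbit^{\sigma_X}$, so the orbits of $(H^{\sigma})_{0}$ are open in $\calorbit^{\sigma_X}$, which, being an algebraic variety, has finitely many connected components. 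Finally, your route (twisted-diagonal embedding of $\GFl_P$ into $\GFl_P\times\GFl_{\theta(P)}$ plus a fixed-point finiteness lemma for involutions) is in substance the argument used to prove the theorem in \cite{NO.2011}, so this is a faithful reconstruction rather than a genuinely different proof.
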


Here is the second one.

\begin{itheorem}[{\cite[Theorem~3.4]{NO.2011}}]
\label{NO.theorem:intersection.of.parabolics}
Let $ P_i \; (i = 1, 2, 3) $ be a parabolic subgroup of $ G $.  
Suppose that $ \GFl_{P_1} \times \GFl_{P_2} \times \GFl_{P_3} $
has finitely many $G$-orbits
and that $Q:= P_1 \cap P_2$ is a parabolic subgroup of $K$.
Then $\GKFl{P_3}{Q}$ has finitely many $K$-orbits.  

Moreover, if $ P_3 $ is a Borel subgroup $ B $ and the product $ P_1 P_2 $ is open in $ G $, 
then the converse is also true, i.e., 
the double flag variety 
$ \GKFl{B}{Q} $ 
is of finite type 
if and only if 
the triple flag variety 
$ \GFl_{P_1} \times \GFl_{P_2} \times \GFl_{B} $ 
is of finite type.
\end{itheorem}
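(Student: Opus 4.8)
The plan is to reduce both finiteness statements to the counting of double cosets and of Borel orbits on a product of flag varieties, and then to invoke the theory of spherical varieties for the more delicate converse direction. Throughout, write $Z := \GFl_{P_1} \times \GFl_{P_2}$ and note that, since $Q = P_1 \cap P_2$ is precisely the $G$-stabilizer of the base point $(eP_1, eP_2) \in Z$, the orbit map identifies $G/Q$ with the $G$-orbit $\calorbit_G := G\cdot(eP_1, eP_2)$, a locally closed $G$-stable subset of $Z$. Likewise $\KFl_Q = K/Q$ is, $K$-equivariantly, the $K$-orbit of $(eP_1,eP_2)$. The elementary but crucial observation I would record first is a pair of bijections obtained by normalizing the $Q$-coordinate to the identity coset: the set of $K$-orbits on $\GKFl{P_3}{Q}$ and the set of $G$-orbits on $\GFl_{P_3} \times (G/Q)$ are \emph{both} in natural bijection with the double coset space $Q \bsl G / P_3$, and hence with each other.

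For the first assertion I would then observe that $\GFl_{P_3} \times (G/Q) \cong \GFl_{P_3} \times \calorbit_G$ is a locally closed $G$-stable subset of the triple flag variety $\GFl_{P_1}\times\GFl_{P_2}\times\GFl_{P_3}$. By hypothesis the latter has finitely many $G$-orbits, so any $G$-stable subset of it has finitely many $G$-orbits as well; through the bijection above this immediately yields finitely many $K$-orbits on $\GKFl{P_3}{Q}$. No sphericity is needed here, and in particular the forward implication of the stated equivalence (take $P_3 = B$) is just this first assertion.

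For the converse, with $P_3 = B$ a Borel subgroup and $P_1 P_2$ open in $G$, the decisive point is that the openness hypothesis forces $\calorbit_G \cong G/Q$ to be the \emph{open} (hence dense) $G$-orbit in $Z$, since $P_1P_2$ is open exactly when $\dim \calorbit_G = \dim Z$. Translating through the standard bijection $Q\bsl G/B \leftrightarrow B\bsl G/Q$ given by $g \mapsto g^{-1}$, the double flag variety $\GKFl{B}{Q}$ is of finite type if and only if $B$ has finitely many orbits on the dense subset $G/Q \subset Z$, whereas the triple flag variety is of finite type if and only if $B$ has finitely many orbits on all of $Z$. Since $G/Q$ is an irreducible variety, finitely many $B$-orbits on it force one of them to be dense; being dense in $G/Q$ it is dense in $Z$, so $Z$ carries a dense $B$-orbit and is therefore spherical.

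The main obstacle is exactly the passage from the open part to the whole: controlling the boundary $Z \setminus (G/Q)$. I expect this to be the heart of the argument, and I would resolve it by appealing to the theorem of Brion and Vinberg that a (normal) spherical $G$-variety carries only finitely many $B$-orbits. Applying it to the smooth projective variety $Z$, whose sphericity we have just established, gives finitely many $B$-orbits on $Z$, i.e. finiteness for the triple flag variety, completing the converse. It is worth stressing in the write-up that the openness of $P_1P_2$ is used in an essential way — it is precisely what makes the relevant $B$-orbits live on a dense subset of $Z$ and thereby produce a dense $B$-orbit; without it the reduction to sphericity breaks down.
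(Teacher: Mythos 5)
Your proof is correct. This paper only quotes the theorem from \cite{NO.2011} without reproducing a proof, so there is no in-paper argument to compare against; your route --- identifying both the $K$-orbits on $\GKFl{P_3}{Q}$ and the $G$-orbits on $\GFl_{P_3}\times G/Q$ with $Q \bsl G / P_3$, using openness of $P_1P_2$ to realize $G/Q$ as the open $G$-orbit in $\GFl_{P_1}\times\GFl_{P_2}$, and settling the converse by producing a dense $B$-orbit and invoking the Brion--Vinberg finiteness theorem for spherical varieties --- is essentially the argument of the cited source.
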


The first criterion is a theoretical one, and the second one is easier to handle, though they overlap largely.  
In this paper, we are mainly interested in the second criterion and
its variant. 

The first main result of this article states that the condition in 
Theorem~\ref{NO.theorem:intersection.of.parabolics} is satisfied only
if $(G, K)$ is Hermitian. 
More precisely, if there exists a pair $(P_{1}, P_{2})$ of parabolic
subgroups of $G$ 
such that $Q = P_{1} \cap P_{2}$ is a parabolic subgroup of $K$ and 
$P_{1} P_{2}$ is open in $G$, then $(G, K)$ must be Hermitian 
(Theorem~\ref{theorem:main,non-Hermitian}). 
So we may restrict our interest to the Hermitian case. 
In this case, such pair $(P_{1}, P_{2})$ exits for any parabolic
subgroup $Q$ of $K$, 
and the classification of such pairs is obtained 
(Theorem~\ref{theorem:main,Hermitian}). 

Finding out such $Q, P_1, P_2$ is almost equivalent
to finding a closed $ K $-orbit inside the open $ G $-orbit in 
$\GFl_{P_1} \times \GFl_{P_2}$. 
In \S3, we give a classification of closed $ K $-orbits on 
the double flag variety $ \GFl_{P_1} \times \GFl_{P_2} $ 
(Theorem~\ref{thoerem:classification.of.closed.orbits}). 

\subsection*{Acknowledgment} 
For the proof of Lemma~\ref{lemma:existence.Q.implies.equal.rank}, 
discussion with Tohru Uzawa was very helpful.  
After we proved Theorem~\ref{theorem:main,non-Hermitian}, 
Hiroshi Yamashita suggested another concise proof and we followed it largely. 
We thank them very much.

\section{Intersection of parabolic subgroups}

Let $ Q $ be a parabolic subgroup of $ K $.  
Let us consider the following condition on $ Q $.

\begin{condition}
\label{condition:Q.is.PcapP.and.PP.dense}
There exists a pair of parabolic subgroups $ P_1, P_2 $ of $ G $ such that 
$ Q = P_1 \cap P_2 $ and the product $ P_1 \cdot P_2 $ is dense in $ G $.
\end{condition}

This condition is exactly the assumption 
of the latter half of Theorem~\ref{NO.theorem:intersection.of.parabolics}.  
Thus, 
under Condition~\ref{condition:Q.is.PcapP.and.PP.dense}, 
$ \GKFl{B}{Q} $ is of finite type if and only if 
$ \GFl_{P_1} \times \GFl_{P_2} \times \GFl_{B} $ is so.

Let us consider the following problem.

\begin{problem}
Let $ Q $ be a parabolic subgroup of $ K $.
\begin{thmenumerate}
\item
Classify all $ Q $ which satisfies Condition~\ref{condition:Q.is.PcapP.and.PP.dense} for a certain pair 
$ P_1, P_2 $ of parabolic subgroups of $ G $.
\item
If $ Q $ satisfies Condition~\ref{condition:Q.is.PcapP.and.PP.dense}, classify pairs $ P_1, P_2 $ up to $ K $-conjugate.
\end{thmenumerate}
\end{problem}

The first easy observation is the following.

\begin{lemma}
\label{lemma:existence.Q.implies.equal.rank}
If there is a parabolic subgroup $ Q $ of $ K $ which satisfies Condition~\ref{condition:Q.is.PcapP.and.PP.dense}, 
then $ \rank G = \rank K $ holds.  
In this case, parabolic subgroups $ P_1, P_2 $ are $ \theta $-stable.
\end{lemma}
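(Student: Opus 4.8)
The plan is to locate a single maximal torus of $G$ inside $Q$ and read off both conclusions from it.

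First I would use only the part of Condition~\ref{condition:Q.is.PcapP.and.PP.dense} asserting $Q = P_{1}\cap P_{2}$, together with the fact that $Q$, being a parabolic subgroup of $K$, is contained in $K$. The key structural input is the standard fact that the intersection of any two parabolic subgroups of a connected reductive group contains a maximal torus of $G$: after conjugating the pair $(P_{1},P_{2})$ into relative position $(P_{1}^{0},\dot{w}P_{2}^{0}\dot{w}^{-1})$, with $P_{1}^{0},P_{2}^{0}$ parabolics containing a fixed maximal torus $T_{0}$ and $\dot{w}\in N_{G}(T_{0})$, both members of the pair contain $T_{0}$. Hence there is a maximal torus $T$ of $G$ with $T\subseteq Q\subseteq K$. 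Since $T$ is then a torus of $K$ of dimension $\rank G$, and $\rank K\le\rank G$ always, we conclude $\rank G=\rank K$ and that $T$ is maximal in $K$ as well.

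Next, for the $\theta$-stability, I would exploit that $T\subseteq K\subseteq G^{\theta}$, so that $\theta$ fixes $T$ pointwise and therefore acts as the identity on $\lie{t}=\Lie T$. Writing the root space decomposition $\lie{g}=\lie{t}\oplus\bigoplus_{\alpha\in\Phi}\lie{g}_{\alpha}$ relative to $T$, the relation $\theta|_{\lie{t}}=\id$ forces $\theta(\lie{g}_{\alpha})=\lie{g}_{\alpha}$ for every root $\alpha$ (apply $\theta$ to $[H,X]=\alpha(H)X$ for $H\in\lie{t}$, $X\in\lie{g}_{\alpha}$). Because $P_{1},P_{2}\supseteq T$, each $\lie{p}_{i}$ contains the Cartan subalgebra $\lie{t}$ and is therefore of the form $\lie{p}_{i}=\lie{t}\oplus\bigoplus_{\alpha\in\Phi_{i}}\lie{g}_{\alpha}$ for some closed subset $\Phi_{i}\subseteq\Phi$; being a sum of $\theta$-stable subspaces, it is $\theta$-stable. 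Since a parabolic subgroup is connected and self-normalizing, $P_{i}=N_{G}(\lie{p}_{i})$ is determined by $\lie{p}_{i}$, whence $\theta(P_{i})=P_{i}$.

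The only genuinely nontrivial ingredient is the common-maximal-torus fact for two parabolics; everything after that is bookkeeping with the root space decomposition. I would also remark that the density of $P_{1}P_{2}$ plays no role in this lemma—what is actually used is merely $Q\subseteq K$—so the statement and proof go through under the weaker hypothesis that $P_{1}\cap P_{2}$ is a parabolic subgroup of $K$. The main point to check with care is that $\theta$ genuinely fixes $\lie{t}$ pointwise rather than merely stabilizing it, which is exactly where $T\subseteq K$ (and not just the $\theta$-stability of $T$) is essential.
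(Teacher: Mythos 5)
Your proof is correct and follows essentially the same route as the paper: both rest on the key fact that $P_{1} \cap P_{2}$ contains a maximal torus $T$ of $G$ (the paper reduces to Borel subgroups $B_{i} \subset P_{i}$ and cites Humphreys, \S16 Exercise 8, where you sketch the Bruhat/relative-position argument directly for parabolics), deduce $\rank G = \rank K$ from $T \subseteq Q \subseteq K$, and obtain $\theta$-stability from the root space decomposition relative to the pointwise-$\theta$-fixed torus $T$. Your extra details---the explicit check that $\theta$ fixes each root space $\lie{g}_{\alpha}$ and the passage from $\lie{p}_{i}$ to $P_{i}$ via self-normalization, plus the observation that density of $P_{1}P_{2}$ is never used---only make explicit what the paper leaves implicit.
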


\begin{proof}
Let $ P_1, P_2 $ be as in Condition~\ref{condition:Q.is.PcapP.and.PP.dense}.  
Since $ P_i $ is parabolic, it contains a Borel subgroup $ B_i \subset G $.  
For arbitrary chosen Borel subgroups $ B_1 $ and $ B_2 $, 
the intersection $ B_1 \cap B_2 $ contains a maximal torus $ T $ of 
$G$ (\cite[\S16 Exercise 8]{Humphreys.1972}).  
We have 
\begin{equation*}
T \subset B_1 \cap B_2 \subset P_1 \cap P_2 = Q \subset K ,
\end{equation*}
hence $ T $ is also a maximal torus of $ K $, which proves that $ \rank G = \rank K $.  
Now, the Lie algebra of $ P_1 $ admits a root space decomposition with respect to $ T $, 
hence it is $ \theta $-stable.
\end{proof}

Let $ G $ be a simple group.  
We say a symmetric pair $ (G, K) $ is of Hermitian type if 
the center of $ K $ is of positive dimension, and non-Hermitian otherwise.  
It is well known that, if the center of $ K $ has positive dimension, 
it must be one.  
Also, if $ (G,K) $ is Hermitian, then $ \rank G = \rank K $ holds, 
but the converse is not true.
Let $\lie{g} = \lie{k} \oplus \lie{s}$ be the Cartan decomposition of
$\lie{g}$ defined by (the differential of) $\theta$. 
It is also well known that $(G, K)$ is Hermitian if and only if the
adjoint representation $(\Ad, \lie{s})$ of $K$ on $\lie{s}$ is
reducible. 
Since we have assumed that $K (= (G^{\theta})_{0})$ is connected, 
the representation $(\Ad, \lie{s})$ of $K$ is reducible if and only if 
the adjoint representation $(\ad, \lie{s})$ of $\lie{k}$ is reducible.

\begin{theorem}\label{theorem:main,non-Hermitian}
Let $ G $ be a simple connected algebraic group. 
If there is a parabolic subgroup $ Q $ of $ K $ which satisfies 
Condition~\ref{condition:Q.is.PcapP.and.PP.dense}, 
then $(G,K)$ is of Hermitian type. 
\end{theorem}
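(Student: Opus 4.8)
The plan is to combine the rank information of Lemma~\ref{lemma:existence.Q.implies.equal.rank} with a dimension count coming from the density of $P_1P_2$, and then to extract from the resulting root-theoretic picture a nonzero $W_K$-invariant vector, which exhibits a nonzero center of $\lie{k}$. First I would fix notation. By Lemma~\ref{lemma:existence.Q.implies.equal.rank} we have $\rank G=\rank K$ and $P_1,P_2$ are $\theta$-stable, and its proof produces a maximal torus $T\subset Q\subset K$ that is also maximal in $G$. Since $\theta$ fixes $T$ pointwise, every root space $\lie{g}_\alpha$ ($\alpha\in\Phi=\Phi(G,T)$) is $\theta$-stable, so $\Phi=\Phi_c\sqcup\Phi_n$ splits into compact roots ($\lie{g}_\alpha\subset\lie{k}$) and noncompact roots ($\lie{g}_\alpha\subset\lie{s}$). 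As $P_1,P_2\supset T$, each is given by an element $H_i\in\lie{t}_{\R}$ through $\Psi_i:=\{\alpha\in\Phi:\alpha(H_i)\ge 0\}$, the roots of $\lie{p}_i$.

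Next I would run the dimension count. Density of $P_1P_2$ makes the multiplication map $P_1\times P_2\to G$ dominant, so $\dim\lie{p}_1+\dim\lie{p}_2-\dim\lie{q}=\dim\lie{g}$. Because $Q\subset K$ we have $\lie{q}\subset\lie{k}$; writing $\lie{k}_i=\lie{p}_i\cap\lie{k}$ and $\lie{s}_i=\lie{p}_i\cap\lie{s}$ for the $\theta$-eigenspace pieces of $\lie{p}_i$, this gives $\lie{q}=\lie{k}_1\cap\lie{k}_2$ and $\lie{s}_1\cap\lie{s}_2=0$. Substituting into the dimension identity forces simultaneously $\lie{k}_1+\lie{k}_2=\lie{k}$ and $\lie{s}=\lie{s}_1\oplus\lie{s}_2$. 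In root terms, with $\Sigma:=\Psi_1\cap\Phi_n$, this reads $\Phi_n=\Sigma\sqcup(-\Sigma)$ and $\Psi_2\cap\Phi_n=-\Sigma$; in particular no noncompact root annihilates $H_1$ or $H_2$, and $\Sigma=\{\alpha\in\Phi_n:\alpha(H_1)>0\}=\{\alpha\in\Phi_n:\alpha(H_2)<0\}$.

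The decisive step, which I expect to be the main obstacle, is to pin down the compact roots by playing the two hypotheses on $Q$ against each other. From $\lie{k}_1+\lie{k}_2=\lie{k}$, applied to both $\gamma$ and $-\gamma$, no compact root can take the same strict sign on $H_1$ and on $H_2$. On the other hand, since $Q=P_1\cap P_2$ is a \emph{parabolic} subgroup of $K$, the parabolic subalgebras $\lie{k}_1,\lie{k}_2$ share a Borel subalgebra containing $\lie{t}$, so $H_1,H_2$ lie in the closure of a common Weyl chamber for $\Phi_c$, i.e.\ $\gamma(H_1)\gamma(H_2)\ge 0$ for all $\gamma\in\Phi_c$. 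Together these force $\gamma(H_1)=0$ or $\gamma(H_2)=0$ for every compact $\gamma$; hence $\Phi_c=\Phi_c^{(1)}\cup\Phi_c^{(2)}$ with $\Phi_c^{(i)}=\{\gamma\in\Phi_c:\gamma(H_i)=0\}$. Getting this clean dichotomy — rather than a messier sign condition that would also be met by the trivial ``opposite parabolic'' choice $P_2=\bar P_1$ — is exactly where the parabolicity of $Q$ must be used, and is the heart of the argument.

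Finally I would harvest the center. For $\gamma\in\Phi_c^{(1)}$ the reflection $s_\gamma$ fixes $H_1$ and preserves $\Phi_n$ (it is realized by an element of $N_K(T)$, whose adjoint action stabilizes $\lie{s}$), so it preserves $\Sigma=\{\alpha\in\Phi_n:\alpha(H_1)>0\}$; likewise $s_\gamma$ for $\gamma\in\Phi_c^{(2)}$ fixes $H_2$ and preserves $\Sigma$. Since $\Phi_c=\Phi_c^{(1)}\cup\Phi_c^{(2)}$ generates $W_K$, the set $\Sigma$ is $W_K$-stable, and therefore $v_\Sigma:=\sum_{\alpha\in\Sigma}\alpha$ is $W_K$-invariant. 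It is nonzero because $v_\Sigma(H_1)=\sum_{\alpha\in\Sigma}\alpha(H_1)>0$, noting $\Sigma\neq\emptyset$ as $\theta$ is nontrivial. A nonzero $W_K$-fixed vector in $\lie{t}^{\ast}$ corresponds to a nonzero element of the center of $\lie{k}$, so the center of $K$ is positive-dimensional and $(G,K)$ is Hermitian.
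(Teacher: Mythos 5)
Your proof is correct, but it closes the argument by a genuinely different mechanism than the paper. The two proofs share their first half: $\theta$-stability of $P_1,P_2$ (Lemma~\ref{lemma:existence.Q.implies.equal.rank}), the consequences $\lie{g}=\lie{p}_1+\lie{p}_2$ and $\lie{p}_1\cap\lie{p}_2\subset\lie{k}$ of Condition~\ref{condition:Q.is.PcapP.and.PP.dense}, hence $\lie{s}=(\lie{s}\cap\lie{p}_1)\oplus(\lie{s}\cap\lie{p}_2)$, and the choice of a Borel subalgebra $\lie{b}_K$ of $\lie{k}$ inside $\lie{q}$ --- which is indeed, as you flagged, exactly where parabolicity of $Q$ (rather than mere containment $Q\subset K$) enters in both arguments. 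From there the paper proceeds module-theoretically: it quotes the equivalence that $(G,K)$ is Hermitian iff $(\ad,\lie{s})$ is a reducible $\lie{k}$-module, shows $\lie{s}\cap\lie{p}_i\neq 0$ using simplicity (via $[\lie{s},\lie{s}]=\lie{k}$), and concludes reducibility by highest weight theory (an irreducible $\lie{k}$-module has a unique $\lie{b}_K$-highest-weight line, so it is $\lie{b}_K$-indecomposable). You instead argue root-theoretically: the dichotomy that every compact root vanishes on $H_1$ or on $H_2$ (your combination of $\lie{k}_1+\lie{k}_2=\lie{k}$ with the common-chamber condition coming from $\lie{b}_K$ is sound), the resulting $W_K$-stability of $\Sigma$, and the nonzero $W_K$-invariant vector $\sum_{\alpha\in\Sigma}\alpha$, which exhibits a positive-dimensional center of $\lie{k}$; this verifies the paper's actual definition of Hermitian directly, bypassing the reducibility criterion that the paper invokes as ``well known.'' A pleasant byproduct of your route is that nonemptiness of $\Sigma$ falls out of $\Phi_n=\Sigma\sqcup(-\Sigma)$ and the nontriviality of $\theta$, so you never need the simplicity argument $[\lie{s},\lie{s}]=\lie{k}$ at all. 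The trade-off: the paper's proof is shorter and avoids root combinatorics; yours is longer but more self-contained and more explicit, since the invariant linear form you produce is essentially the central element of $\lie{k}$ defining the grading $\lie{g}=\lie{s}_{-}\oplus\lie{k}\oplus\lie{s}_{+}$.
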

\begin{proof}
Assume that there are parabolic subgroups $P_{1}$, $P_{2}$
which satisfy Condition~\ref{condition:Q.is.PcapP.and.PP.dense}. 
Since (i) $\lie{p}_{1}$, $\lie{p}_{2}$ are $\theta$-stable 
by Lemma~\ref{lemma:existence.Q.implies.equal.rank} and 
(ii) $\lie{g} = \lie{p}_{1} + \lie{p}_{2}$, 
$\lie{p}_{1} \cap \lie{p}_{2} \subset \lie{k}$ 
by Condition~\ref{condition:Q.is.PcapP.and.PP.dense}, 
the space $\lie{s}$ is a direct sum of subspaces 
$\lie{s} \cap \lie{p}_{1}$, $\lie{s} \cap \lie{p}_{2}$. 
These subspaces are non-zero. 
Actually, if $\lie{p}_{i} \cap \lie{s} = \{0\}$ for $i=1$ or $2$, 
then $\lie{p}_{j} \cap \lie{s} = \lie{s}$ for $j\not=i$. 
Since $\lie{g}$ is simple, $[\lie{s}, \lie{s}] = \lie{k}$. 
It follows that $\lie{p}_{j} \supset \lie{k} + \lie{s} = \lie{g}$, 
so 
$\lie{p}_{i} 
= \lie{p}_{i} \cap \lie{p}_{j} = \lie{q} \subset \lie{k}$. 
But this is impossible since $\lie{p}_{i}$ is a parabolic subalgebra
and $\lie{k}$ is a symmetric subalgebra. 

Since we have assumed that $\lie{p}_{1} \cap \lie{p}_{2}$ is a
parabolic subalgebra of $\lie{k}$, 
we can choose a Borel subalgebra $\lie{b}_{K}$ of $\lie{k}$ so that it
is contained in $\lie{p}_{1} \cap \lie{p}_{2}$. 
Then 
$\lie{s} 
= (\lie{s} \cap \lie{p}_{1}) \oplus (\lie{s} \cap \lie{p}_{2})$ is a
decomposition of the $\lie{b}_{K}$-module $(\mathrm{ad}, \lie{s})$. 
By the highest weight theory, 
$(\mathrm{ad}, \lie{s})$ is a reducible $\lie{k}$-module 
if and only if 
$(\mathrm{ad}, \lie{s})$ is a decomposable $\lie{b}_{K}$-module. 
We know that both $\lie{p}_{1} \cap \lie{s}$ and 
$\lie{p}_{2} \cap \lie{s}$ are non-zero. 
Therefore, $(\mathrm{ad}, \lie{s})$ is a reducible $\lie{k}$-module. 
It follows that $(G,K)$ is Hermitian. 
\end{proof}

\begin{remark}
Originally, we proved Theorem~\ref{theorem:main,non-Hermitian} 
by using the classification of simple symmetric pairs. 
Namely, we checked one by one that no parabolic subgroup $Q$
of $K$ satisfies Condition~\ref{condition:Q.is.PcapP.and.PP.dense} if
$(G,K)$ is non-Hermitian. 
Later, Hiroshi Yamashita suggested the above simpler proof to us, and we followed his suggestion.  
It much improves the proof of the theorem and we thank for his generous allowance to quote it.  
\end{remark}

By this theorem, we may restrict our interest to the Hermitian case. 

\begin{theorem}\label{theorem:main,Hermitian}
Let $ G $ be a simple connected algebraic group.
Assume that the pair $ (G, K) $ is of Hermitian type. 
Suppose $Q$ is any parabolic subgroup of $ K $. 
Let $\lie{s} = \lie{s}_{+} \oplus \lie{s}_{-}$ be the irreducible
decomposition of the adjoint representation of $K$ on $\lie{s}$. 
\begin{enumerate}
\item
The product 
$Q \exp \lie{s}_{\pm}$ is a parabolic subgroup of $G$. 
Let 
\begin{align}
&
P_{1} = Q \exp \lie{s}_{+}
&
&
P_{2} = K \exp \lie{s}_{-}.
\label{eq:a pair satisfying 2.1, type1}
\end{align}
%
%
Then the pair $(P_{1}, P_{2})$ satisfies 
Condition~\ref{condition:Q.is.PcapP.and.PP.dense}. 
\item
Suppose 
$(\lie{g}, \lie{k}) 
\simeq (\lie{sl}_{p+q}, \lie{sl}_{p} \oplus \lie{sl}_{q} \oplus \C)$ 
($p,q \geq 2$).  
By the classification of Hermitian symmetric pairs 
(see \cite{Knapp.2002} for example), 
this is the only case when $K$ is not simple modulo its center. 
Define 
\begin{align*}
& 
\lie{k}^{\mathrm{I}} := \lie{sl}_{p}
\supset 
\lie{q}^{\mathrm{I}} := \lie{k}^{\mathrm{I}} \cap \lie{q} 
\quad \mbox{and} \quad 
\lie{k}^{\mathrm{II}} := \lie{sl}_{q}
\supset 
\lie{q}^{\mathrm{II}} := \lie{k}^{\mathrm{II}} \cap \lie{q}. 
\end{align*}
Let $P_{1}$ and $P_{2}$ be the closed subgroups of $G$ whose Lie
algebras are 
\begin{align}
&
\lie{p}_{1} 
= 
(\lie{q}^{\mathrm{I}} \oplus \lie{k}^{\mathrm{II}} \oplus \C)
\oplus \lie{s}_{+}
\quad \mbox{and} \quad 
\lie{p}_{2} 
= 
(\lie{k}^{\mathrm{I}} \oplus \lie{q}^{\mathrm{II}} \oplus \C)
\oplus \lie{s}_{-}, 
\label{eq:a pair satisfying 2.1, type2}
\end{align}
respectively. 
%
%
Then $P_{1}$ and $P_{2}$ are parabolic subgroups of $G$, 
and the pair $(P_{1}, P_{2})$ satisfies
Condition~\ref{condition:Q.is.PcapP.and.PP.dense}. 
\item
Up to the exchange of the simple factors and/or the exchange of 
$\lie{s}_{+}$, $\lie{s}_{-}$, 
the cases \eqref{eq:a pair satisfying 2.1, type1} and 
\eqref{eq:a pair satisfying 2.1, type2} 
classify 
the pairs $(P_{1}, P_{2})$ which satisfy 
Condition~\ref{condition:Q.is.PcapP.and.PP.dense} for $Q$. 
\end{enumerate}
\end{theorem}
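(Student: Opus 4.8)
The plan is to treat the three assertions in turn, reserving the real work for the classification in~(3). Throughout I pass to Lie algebras: since all the groups in sight are connected and a parabolic subgroup is determined by its Lie algebra, Condition~\ref{condition:Q.is.PcapP.and.PP.dense} translates into $\lie{p}_1\cap\lie{p}_2=\lie{q}$ together with $\lie{p}_1+\lie{p}_2=\lie{g}$, the latter being equivalent to openness (hence density) of $P_1P_2$ because the differential of the product map is then surjective at the identity. I shall use freely the Hermitian structure: there is a grading element $Z$ spanning the centre of $\lie{k}$ with $\ad(Z)=\pm 1$ on $\lie{s}_{\pm}$ and $0$ on $\lie{k}$, so that $\lie{g}=\lie{s}_{-}\oplus\lie{k}\oplus\lie{s}_{+}$ is a short grading, the $\lie{s}_{\pm}$ are abelian, $[\lie{k},\lie{s}_{\pm}]\subseteq\lie{s}_{\pm}$, and $\lie{k}\oplus\lie{s}_{\pm}$ are the two opposite maximal parabolics with Levi $\lie{k}$.

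For~(1) and~(2) I only verify the stated subspaces. In~(1), $\lie{p}_1=\lie{q}\oplus\lie{s}_{+}$ is a subalgebra because $[\lie{s}_{+},\lie{s}_{+}]=0$ and $[\lie{q},\lie{s}_{+}]\subseteq\lie{s}_{+}$; choosing a Borel $\lie{b}_K\subseteq\lie{q}$ of $\lie{k}$ it contains the Borel $\lie{b}_K\oplus\lie{s}_{+}$ of $\lie{g}$, hence is parabolic, and likewise $\lie{p}_2=\lie{k}\oplus\lie{s}_{-}$ is parabolic. The grading gives at once $\lie{p}_1\cap\lie{p}_2=\lie{q}$ and $\lie{p}_1+\lie{p}_2=\lie{g}$. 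For~(2) the only extra input is that a parabolic of $\lie{k}=\lie{sl}_p\oplus\lie{sl}_q\oplus\C$ splits along the simple factors as $\lie{q}=\lie{q}^{\mathrm{I}}\oplus\lie{q}^{\mathrm{II}}\oplus\C$; the same computation with the refined Levi factors then yields $\lie{p}_1\cap\lie{p}_2=\lie{q}$ and $\lie{p}_1+\lie{p}_2=\lie{g}$.

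The heart is~(3). By Lemma~\ref{lemma:existence.Q.implies.equal.rank} both $\lie{p}_i$ are $\theta$-stable and we may fix a Cartan $\lie{h}\subseteq\lie{k}$; as $Z\in\lie{h}$, each $\lie{p}_i$ is $\ad(Z)$-graded, so $\lie{p}_i=\lie{l}_i\oplus V_i^{+}\oplus V_i^{-}$ with $\lie{l}_i=\lie{p}_i\cap\lie{k}$ and $V_i^{\pm}=\lie{p}_i\cap\lie{s}_{\pm}$. The argument already used for Theorem~\ref{theorem:main,non-Hermitian} gives $\lie{s}=(\lie{p}_1\cap\lie{s})\oplus(\lie{p}_2\cap\lie{s})$ with both summands non-zero, which by homogeneity refines to $\lie{s}_{\pm}=V_1^{\pm}\oplus V_2^{\pm}$; moreover each $\lie{l}_i\supseteq\lie{b}_K$ is a standard parabolic of $\lie{k}$ with $\lie{l}_1\cap\lie{l}_2=\lie{q}$ and $\lie{l}_1+\lie{l}_2=\lie{k}$. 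The crucial and least obvious step is that each $V_i^{\pm}$ is either $0$ or all of $\lie{s}_{\pm}$. To see this, let $A_i$, $B_i$ be the sets of roots occurring in $V_i^{+}$, $V_i^{-}$. Parabolicity of $\lie{p}_1$ and $\lie{p}_2$ (for each positive noncompact root $\gamma$ at least one of $\lie{g}_{\gamma}$, $\lie{g}_{-\gamma}$ lies in $\lie{p}_i$) together with the direct sums $\lie{s}_{\pm}=V_1^{\pm}\oplus V_2^{\pm}$ forces, by a cardinality count, $A_1=-B_2$ and $A_2=-B_1$. Hence $A_1=\Phi(V_1^{+})$ is stable under adding roots of $\lie{l}_1$ (since $V_1^{+}$ is $\lie{l}_1$-stable) and, because $A_1=-B_2$ with $V_2^{-}$ being $\lie{l}_2$-stable, also under subtracting roots of $\lie{l}_2$. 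As $\lie{l}_1\supseteq\lie{b}_K$ and $\lie{l}_2\supseteq\lie{b}_K$, these two stabilities cover all positive and all negative compact roots, so $V_1^{+}$ is $\lie{k}$-stable; by irreducibility of $\lie{s}_{+}$ it is $0$ or $\lie{s}_{+}$, and likewise for the remaining three. Combined with $V_i\neq 0$, and after exchanging $P_1\leftrightarrow P_2$ and/or $\lie{s}_{+}\leftrightarrow\lie{s}_{-}$, we are reduced to $V_1=\lie{s}_{+}$, $V_2=\lie{s}_{-}$, i.e. $\lie{p}_1=\lie{l}_1\oplus\lie{s}_{+}$, $\lie{p}_2=\lie{l}_2\oplus\lie{s}_{-}$.

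It then remains to classify pairs of parabolics $\lie{l}_1,\lie{l}_2\supseteq\lie{b}_K$ of $\lie{k}$ with $\lie{l}_1\cap\lie{l}_2=\lie{q}$ and $\lie{l}_1+\lie{l}_2=\lie{k}$ as vector spaces. Encoding $\lie{l}_i$ by the subset $T_i$ of simple roots of $\lie{k}$ spanning its Levi, the intersection condition reads $\lie{q}\leftrightarrow T_1\cap T_2$, while the vector-space equality $\lie{l}_1+\lie{l}_2=\lie{k}$ means that every root of $\lie{k}$ is supported in $T_1$ or in $T_2$. Applying this to the highest root of each simple factor of $\lie{k}$, whose support is the full set of simple roots of that factor, I conclude that the simple roots of each factor lie entirely in $T_1$ or entirely in $T_2$. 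If $\lie{k}$ is simple modulo its centre this forces $T_1$ or $T_2$ to be everything, i.e. $\lie{l}_1=\lie{k}$ or $\lie{l}_2=\lie{k}$, whence (using $\lie{l}_1\cap\lie{l}_2=\lie{q}$) the pair \eqref{eq:a pair satisfying 2.1, type1}. In the only remaining Hermitian case $\lie{k}_{ss}=\lie{sl}_p\oplus\lie{sl}_q$ there is the additional possibility that the two factors are distributed one to $T_1$ and one to $T_2$; tracing through the definitions of $\lie{q}^{\mathrm{I}}$, $\lie{q}^{\mathrm{II}}$ this is exactly \eqref{eq:a pair satisfying 2.1, type2}, up to exchanging the two simple factors. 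This yields the asserted classification, the main obstacle having been the rigidity statement $V_i^{\pm}\in\{0,\lie{s}_{\pm}\}$, which is precisely what transfers all the freedom from the noncompact part $\lie{s}$ to the Levi part inside $\lie{k}$.
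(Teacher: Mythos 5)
Your proof is correct, and its overall skeleton matches the paper's: parts (1)--(2) are verified the same way (via the Borel subalgebra $\lie{b}_K\oplus\lie{s}_{\pm}$ of $\lie{g}$), and in (3) both arguments fix a Borel $\lie{b}_K\subseteq\lie{q}=\lie{p}_1\cap\lie{p}_2$, use Lemma~\ref{lemma:existence.Q.implies.equal.rank} to make the $\lie{p}_i$ $\theta$-stable, split them into their $\lie{k}$- and $\lie{s}$-parts, and finish by showing each simple ideal of $\lie{k}$ lies entirely in $\lie{p}_1$ or in $\lie{p}_2$ (your ``the highest root of each simple factor has full support'' is exactly the paper's ``$\lie{b}_K\cap\lie{k}_s$ together with the lowest root space generates $\lie{k}_s$'', in combinatorial dress). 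The genuine divergence is at the step you correctly identify as the crux, namely that $\lie{p}_i\cap\lie{s}_{\pm}$ is $0$ or all of $\lie{s}_{\pm}$. The paper deduces $\lie{s}\cap\lie{p}_1=\lie{s}_+$ and $\lie{s}\cap\lie{p}_2=\lie{s}_-$ from highest weight theory: $\lie{s}_{\pm}$ are indecomposable $\lie{b}_K$-modules, so the decomposition $\lie{s}=(\lie{s}\cap\lie{p}_1)\oplus(\lie{s}\cap\lie{p}_2)$ into two nonzero $\lie{b}_K$-submodules must match $\lie{s}_+\oplus\lie{s}_-$ (strictly, Krull--Schmidt identifies the summands only up to isomorphism; actual equality of subspaces also uses that $\lie{s}_+$ and $\lie{s}_-$ have disjoint $\lie{h}$-weights, a point the paper leaves implicit). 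You replace this module-theoretic input by root combinatorics: parabolicity of both $\lie{p}_i$ plus directness of $\lie{s}_{\pm}=V_1^{\pm}\oplus V_2^{\pm}$ forces $A_1=-B_2$ and $A_2=-B_1$ (this is a direct two-line implication from disjointness and parabolicity, not really a ``cardinality count''), and then closure of $A_1$ under adding positive compact roots ($\lie{b}_K$-stability of $V_1^+$) and under subtracting them ($\lie{b}_K$-stability of $V_2^-$ transported through the duality) makes $V_1^+$ a $\lie{k}$-submodule, hence $0$ or $\lie{s}_+$ by irreducibility. Your route is longer but self-contained and purely root-theoretic, and it sidesteps the Krull--Schmidt subtlety; the paper's route is shorter but leans on the representation-theoretic fact. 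You are also more explicit about the dictionary between Condition~\ref{condition:Q.is.PcapP.and.PP.dense} and the Lie-algebra conditions $\lie{p}_1\cap\lie{p}_2=\lie{q}$ and $\lie{p}_1+\lie{p}_2=\lie{g}$, which the paper treats as understood.
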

Note that, since $P_{i}$ ($i=1,2$) is connected, it is uniquely
determined by $\lie{p}_{i}$. 

\begin{proof}
(1), (2). 
Let $\lie{b}_{K}'$ be any Borel subalgebra of $\lie{k}$. 
Since $(G,K)$ is Hermitian, $\lie{b}_{K}' \oplus \lie{s}_{\pm}$ is
a Borel subalgebra of $\lie{g}$. 
It follows that all the groups appearing in (1) or (2) are parabolic
subgroups of $G$, since $K$ is connected. 
It is clear that the pairs $(P_{1}, P_{2})$ in (1) and (2) satisfy 
Condition~\ref{condition:Q.is.PcapP.and.PP.dense}. 

(3) For a parabolic subgroup $Q$ of $K$, assume that there exist
parabolic subgroups $P_{1}$, $P_{2}$ of $G$ which satisfy 
Condition~\ref{condition:Q.is.PcapP.and.PP.dense}. 
For the proof of (3), 
we will show that every simple factor of $K$ is contained in either
$P_{1}$ or $P_{2}$.

As in the proof of Theorem~\ref{theorem:main,non-Hermitian}, 
let $\lie{b}_{K}$ be a Borel subalgebra of $\lie{k}$ contained in 
$\lie{p}_{1} \cap \lie{p}_{2}$. 
Then both $\lie{s}_{+}$ and $\lie{s}_{-}$ are indecomposable
$\lie{b}_{K}$ modules by the highest weight theory. 
As a consequence of the proof of 
Theorem~\ref{theorem:main,non-Hermitian}, 
we may assume $\lie{s} \cap \lie{p}_{1}=\lie{s}_{+}$ and 
$\lie{s} \cap \lie{p}_{2}=\lie{s}_{-}$, 
by changing $\lie{s}_{+}$ and $\lie{s}_{-}$ if needed. 

Let $K_{s}$ be the connected subgroup of $K$ whose Lie algebra
$\lie{k}_{s}$ is a simple ideal of $\lie{k}$. 
The Borel subalgebra $\lie{b}_{K}$ defines a positive root system of
$\lie{k}_{s}$. 
Let $\gamma$ be the corresponding lowest root of $\lie{k}_{s}$, and
denote by $(\lie{k}_{s})_{\gamma}$ the lowest root space. 
By the proof of Theorem~\ref{theorem:main,non-Hermitian}, 
we have 
$\lie{p}_{1} + \lie{p}_{2} = \lie{g}$, so 
$(\lie{p}_{1} \cap \lie{k}_{s}) + (\lie{p}_{2} \cap \lie{k}_{s}) 
= \lie{k}_{s}$, 
since $\lie{p}_{1}$ and $\lie{p}_{2}$ are $\theta$-stable.  
Therefore, at least one of $\lie{p}_{1}$ and $\lie{p}_{2}$, 
say $\lie{p}_{2}$, 
contains the lowest root space $(\lie{k}_{s})_{\gamma}$. 
Since (i) $\lie{p}_{2}$ contains both $\lie{b}_{K} \cap \lie{k}_{s}$ and 
$(\lie{k}_{s})_{\gamma}$, and 
(ii) $\lie{b}_{K} \cap \lie{k}_{s}$ and $(\lie{k}_{s})_{\gamma}$
generate $\lie{k}_{s}$, 
the Lie algebra $\lie{k}_{s}$ is a subalgebra of $\lie{p}_{2}$. 
Since $K_{s}$ is connected, $K_{s}$ is contained in $P_{2}$, 
so $K_{s} \cap P_{2} = K_{s}$. 
In this case, 
$K_{s} \cap P_{1} = K_{s} \cap P_{1} \cap P_{2} = K_{s} \cap Q$.

Suppose $K$ is simple modulo its center. 
We have proved the followings: 
After exchanging $P_{1}$, $P_{2}$ and/or $\lie{s}_{+}$, $\lie{s}_{-}$
if needed, 
$P_{1}$ and $P_{2}$ satisfies 
$\lie{s} \cap \lie{p}_{1} = \lie{s}_{+}$, 
$\lie{s} \cap \lie{p}_{2} = \lie{s}_{-}$, 
$K \cap P_{1} = Q$ and $K \cap P_{2} = K$. 
Here, we used the fact that the center of $K$ is contained 
in $Q = P_{1} \cap P_{2}$. 
In this case, 
$P_{1} = Q \exp \lie{s}_{+}$ and $P_{2} = K \exp \lie{s}_{-}$. 
These are the groups in \eqref{eq:a pair satisfying 2.1, type1}. 
Just in the same way, we can show the case when $K$ is not simple
modulo its center. 
\end{proof}

\begin{remark}
If a symmetric pair $(G, K)$ is Hermitian, 
then the dimension of the center of $\lie{k}$ is one. 
But the converse is not always true if $K$ is not connected. 
For example, 
$(G, G^{\theta}) 
= (SO(n+2,\C), S(O(n,\C) \times O(2,\C)))$ is not Hermitian. 
Actually, the center of 
$G^{\theta} = S(O(n,\C) \times O(2,\C))$ is a finite group. 
On the other hand, 
$(G, K) 
= (SO(n+2,\C), SO(n,\C) \times SO(2,\C))$ is Hermitian. 
\end{remark}

\section{Closed orbits on double flag variety}

Let $ P_1 $ and $ P_2 $ be parabolic subgroups of $ G $.  
If $ Q' = K \cap P_1 \cap P_2 $ is a parabolic subgroup of $ K $, we have a natural embedding 
\begin{equation*}
K/Q' \hookrightarrow G/P_1 \times G/P_2 , \quad
k Q' \mapsto (k P_1, k P_2) .
\end{equation*}
Since $ K/Q' $ is a flag variety, it is compact, and the above embedding is a closed embedding.  
Thus we have a closed $ K $-orbit on $ \GFl_{P_1} \times \GFl_{P_2} $ which is isomorphic to $ K/Q' $.  

In particular, if $ Q = P_1 \cap P_2 $ is a parabolic subgroup of $ K $, 
$ K/Q $ is isomorphic to a closed $ K $-orbit on $ \GFl_{P_1} \times \GFl_{P_2} $.  
If, in addition to that, $ P_1 P_2 $ is dense in $ G $, 
the closed $ K $-orbit $ K \cdot (e P_1, e P_2) $ is in the open dense $ G $ orbit.  
Thus to find out $ Q, P_1, P_2 $ which satisfies Condition~\ref{condition:Q.is.PcapP.and.PP.dense} is 
almost equivalent to 
finding a closed $ K $-orbit inside the open $ G $-orbit in $ \GFl_{P_1} \times \GFl_{P_2} $.  

For this purpose, we will give a classification of closed $ K $-orbits on 
the double flag variety $ \GFl_{P_1} \times \GFl_{P_2} $ in terms of Weyl groups.

Let $ \calorbit \subset \GFl_{P_1} \times \GFl_{P_2} $ be a closed $ K $-orbit.  
For $ i = 1, 2 $, we denote the projection to the $ i $-th factor by 
$ \pi_i : \GFl_{P_1} \times \GFl_{P_2} \to \GFl_{P_i} $.  
Then $ \pi_i $ is a $ K $-equivariant map, and it brings $ K $-orbits to $ K $-orbits.  
Since $ \calorbit $ is compact by assumption, the image $ \orbit_i := \pi_i(\calorbit) $ is also compact, 
hence a closed $ K $-orbit on $ \GFl_{P_i} $.  
Let us denote the set of closed $ K $-orbits on a $ K $-variety $ \mathfrak{X} $ by 
$ \clko{\mathfrak{X}} $.  
Then the above correspondence gives a map 
\begin{align*}
\pi_{12} = \pi_1 \times \pi_2 & : 
\clko{\GFl_{P_1} \times \GFl_{P_2}} \to \clko{\GFl_{P_1}} \times \clko{\GFl_{P_2}} , \\
&
\pi_{12}(\calorbit) = (\orbit_1, \orbit_2).
\end{align*}

\begin{theorem}
\label{thoerem:classification.of.closed.orbits}
The map 
$ \pi_{12} : \clko{\GFl_{P_1} \times \GFl_{P_2}} \to \clko{\GFl_{P_1}} \times \clko{\GFl_{P_2}} $ 
above is bijective.  
In particular, there are finitely many closed $ K $-orbits on 
$ \GFl_{P_1} \times \GFl_{P_2} $.
\end{theorem}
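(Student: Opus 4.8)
The plan is to reduce the assertion to the elementary fact that, for the connected reductive group $K$ and two parabolic subgroups $Q_1, Q_2$ of $K$, the diagonal action of $K$ on the product of flag varieties $K/Q_1 \times K/Q_2$ has exactly one closed orbit.

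First I would record how closed orbits look on a single factor. Since $\GFl_{P_i}$ is complete, a $K$-orbit in it is closed if and only if it is itself complete, i.e.\ if and only if the stabilizer $Q_i := \Stab_K(P_i') = K \cap P_i'$ of a representative parabolic $P_i'$ is a parabolic subgroup of $K$; in that case $\orbit_i \cong K/Q_i$ as a $K$-variety. Next comes the key reduction. If $\calorbit$ is a closed $K$-orbit with $\pi_i(\calorbit) = \orbit_i$ for $i = 1,2$, then every point of $\calorbit$ has both coordinates in the respective $\orbit_i$, so $\calorbit \subseteq \orbit_1 \times \orbit_2$; and $\orbit_1 \times \orbit_2$ is a $K$-stable closed subvariety of $\GFl_{P_1} \times \GFl_{P_2}$. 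Conversely, any closed $K$-orbit of $\orbit_1 \times \orbit_2$ is closed in the ambient product, and since its image under $\pi_i$ is a nonempty $K$-orbit contained in the single orbit $\orbit_i$ it must equal $\orbit_i$. Hence the closed $K$-orbits lying over a fixed pair $(\orbit_1, \orbit_2)$ are precisely the closed $K$-orbits of $\orbit_1 \times \orbit_2 \cong K/Q_1 \times K/Q_2$, and $\pi_{12}$ is bijective if and only if each such product carries exactly one closed $K$-orbit.

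I would then prove this last claim, which is the heart of the matter. Fix a Borel subgroup $B_K$ of $K$ and arrange $Q_1, Q_2$ to be standard, i.e.\ $B_K \subseteq Q_1 \cap Q_2$. Then $Q_1 \cap Q_2 \supseteq B_K$ is parabolic, so $K \cdot (eQ_1, eQ_2)$ is a closed orbit; moreover it surjects onto each factor, which already yields surjectivity of $\pi_{12}$. For uniqueness, let $K \cdot (aQ_1, bQ_2)$ be any closed orbit and, after translating by $a^{-1}$, represent it as $K \cdot (eQ_1, cQ_2)$ with $c = a^{-1}b$. Its stabilizer $Q_1 \cap cQ_2c^{-1}$ is parabolic, hence contains a Borel subgroup $B'$ of $K$, which then lies in both $Q_1$ and $cQ_2c^{-1}$; choose $g \in K$ with $gB'g^{-1} = B_K$. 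Since standard parabolics are the unique members of their conjugacy classes containing $B_K$ and parabolic subgroups are self-normalizing, $gQ_1g^{-1} = Q_1$ forces $g \in Q_1$, and $g(cQ_2c^{-1})g^{-1} = Q_2$ forces $gc \in Q_2$; therefore $g \cdot (eQ_1, cQ_2) = (eQ_1, eQ_2)$, so the orbit coincides with $K \cdot (eQ_1, eQ_2)$. This is exactly the injectivity of $\pi_{12}$, and I expect the verification that the common-Borel configuration is forced to be conjugate to the standard one to be the main (though still elementary) obstacle.

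Finally, bijectivity of $\pi_{12}$ yields the finiteness statement once I know that each $\clko{\GFl_{P_i}}$ is finite: this holds because the symmetric subgroup $K$ is spherical in $G$, so $K$ has only finitely many orbits, and \emph{a fortiori} finitely many closed orbits, on $\GFl_{P_i}$; the bijection then identifies $\clko{\GFl_{P_1} \times \GFl_{P_2}}$ with a subset of the finite set $\clko{\GFl_{P_1}} \times \clko{\GFl_{P_2}}$.
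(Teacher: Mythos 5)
Your proof is correct and follows essentially the same route as the paper: both arguments reduce the bijectivity of $ \pi_{12} $ to the fact that the fiber over $ (\orbit_1, \orbit_2) $ consists exactly of the closed $ K $-orbits in $ \orbit_1 \times \orbit_2 \cong K/Q_1 \times K/Q_2 $, and that this product carries a unique closed $ K $-orbit. The only difference is one of detail: the paper obtains surjectivity abstractly (a nonempty closed $ K $-stable subset of a projective variety contains a closed orbit) and cites the Bruhat decomposition for the uniqueness, whereas you exhibit the closed orbit $ K \cdot (eQ_1, eQ_2) $ explicitly and prove the uniqueness from scratch via conjugacy of Borel subgroups and self-normalization of parabolics.
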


\begin{proof}
To see that the map $ \pi_{12} $ is surjective, 
take closed orbits $ \orbit_i \in \clko{\GFl_{P_i}} \; (i = 1, 2) $.  
Since $ \pi_i^{-1}(\orbit_i) $ is a closed set, 
$ \pi_1^{-1}(\orbit_1) \cap \pi_2^{-1}(\orbit_2) $ is closed, 
hence contains a closed $ K $-orbit.  

Now we want to prove $ \pi_{12} $ is injective.  
So let us take a closed $ K $-orbit $ \calorbit \in \clko{\GFl_{P_1} \times \GFl_{P_2}} $.  
If $ (P_1', P_2') \in \calorbit $, 
then $ \orbit_i = K \cdot P_i' $.  
Put $ Q_i = P_i' \cap K $.  
Since $ K \cap P_1' \cap P_2' $ must be parabolic in $ K $, 
$ Q_1 \cap Q_2 $ is a parabolic subgroup.  
Since there is a unique closed $ K $ orbit in $ K/Q_1 \times K/Q_2 $ by Bruhat decomposition, 
the choice of $ (Q_1, Q_2) $ is unique up to diagonal $ K $-conjugate.   
Thus the possibility of $ (P_1', P_2') $ is also unique up to diagonal $ K $-action.
\end{proof}

We can determine the number of closed orbits using 
the classification of closed $ K $-orbits on $ \GFl_P $ by \cite{Brion.Helminck.2000}.  
To quote it, we need notation.  

Let $ B \subset G $ be a $ \theta $-stable Borel subgroup and 
take a $ \theta $-stable maximal torus $ T $ in $ B $.  
We consider root system $ \Delta = \Delta(\lie{g}, \lie{t}) $, Weyl group $ W_G = N_G(T)/Z_G(T) $ etc. 
with respect to this $ T $, 
and choose a positive system $ \Delta^+ $ corresponding to $ B $.  
Then $ \Delta^+ $ determines a simple system $ \Pi $.  
Since $ B $ and $ T $ are $ \theta $-stable, 
$ \theta $ naturally acts on $ W_G $ and $ \Delta $, 
and preserves $ \Delta^+ $ and $ \Pi $.  
Let $ W_G^{\theta} $ be a subgroup of $ W_G $ whose elements are fixed by $ \theta $.  

Since $ B_K = K \cap B $ is a Borel subgroup of $ K $, 
it contains a maximal torus $ T_K $ of $ K $.  We may assume that $ T_K = T^{\theta} $.  
Then $ W_K = N_K(T_K) / Z_K(T_K) $ can be identified with a subgroup of $ W_G $ 
(see \cite[p.~280]{Brion.Helminck.2000}, for example).  

We consider standard parabolic subgroups containing $ B $.  
If $ P $ is a standard parabolic subgroup of $ G $, 
then $ P $ is determined by a subset $ J $ in $ \Pi $; 
the root subsystem $ \Delta_J $ generated by $ J $ is the root system of a Levi component $ L $ of $ P $.  
We always take $ L $ as an algebraic subgroup 
whose Lie algebra is the sum of root subspaces of $ \Delta_J $ and $ \lie{t} $.  
This correspondence is a bijection between the standard parabolic subgroups of $ G $ and the subsets of $ \Pi $.  
If $ P $ corresponds to $ J $, sometimes we will write $ P = P_J $.  
Then $ \theta $-stable parabolic subgroups correspond exactly to the $ \theta $-stable subsets in $ \Pi $.  
Also we denote the Weyl group of $ \Delta_J $ by $ W_J $ or $ W_P $.  
$ W_P^{\theta} $ denotes the subgroup of $ W_P $ whose elements are fixed by $ \theta $.  
(Though $ \theta $ does not preserve $ P $ always, $ W_P^{\theta} $ makes sense.)

\begin{theorem}[{\cite[Proposition~9]{Brion.Helminck.2000}}]
The set of closed $ K $-orbits on $ \GFl_P $ corresponds bijectively to 
$ W_K \backslash W_G^{\theta} / W_P^{\theta} $.  
Bijection simply maps $ W_K \dot{w} W_P^{\theta} $ to $ K \dot{w} P $, 
where $ \dot{w} \in N_G(T) $ is a representative 
of an element of $ W_G^{\theta} $. 
\end{theorem}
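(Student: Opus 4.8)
The plan is to show that closedness of a $K$-orbit on $\GFl_{P}$ is a condition about $\theta$-stable Borel subgroups, and then to read off the parametrization from the combinatorics of the Weyl groups. First I would record the elementary dictionary: because $\GFl_{P}$ is projective and $K$-orbits are locally closed, a $K$-orbit $K\cdot gP$ is closed if and only if it is complete, i.e.\ if and only if the isotropy $K\cap gPg^{-1}$ is a parabolic subgroup of $K$. The key reformulation I would then establish is that $K\cdot gP$ is closed if and only if $gPg^{-1}$ contains a $\theta$-stable Borel subgroup of $G$. One direction is easy: if $B'\subseteq gPg^{-1}$ is $\theta$-stable then $K\cap B'$ is a Borel subgroup of $K$, so $K\cap gPg^{-1}\supseteq K\cap B'$ is parabolic in $K$. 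The other direction is exactly where I expect the real difficulty.

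For the hard direction I would start from a maximal torus $T_K$ of the Borel subgroup $K\cap gBg^{-1}$ of $K$; since $T_K\subseteq K$ it is fixed pointwise by $\theta$, and after conjugating by an element of $K$ I may assume $T_K=T^{\theta}$ for the fixed $\theta$-stable torus $T$. Passing to the $\theta$-stable reductive subgroup $Z_{G}(T_K)$, which contains $T$ as a maximal torus, the induced involution makes $\bigl(Z_{G}(T_K),\,Z_{K}(T_K)\bigr)$ a \emph{split} symmetric pair, because $Z_{K}(T_K)=T_K$ is a proper subtorus of the maximal torus $T$. I would then show, in this split situation, that any Borel subgroup on which $T_K$ has a fixed point is automatically $\theta$-stable, using that $\theta$ fixes $T_K$ pointwise together with the description of split pairs by real roots. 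This split-case statement --- equivalently, that the ``twisted involution'' attached to a closed $K$-orbit is trivial --- is the substantive content, and I would either verify it by hand on the rank-one factors or invoke the general orbit theory of Richardson--Springer; this is the main obstacle.

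Granting the reformulation, I would treat the Borel case $P=B$ first. The $\theta$-stable Borel subgroups containing $T$ are exactly the $\dot wB\dot w^{-1}$ with $w\in W_{G}^{\theta}$: indeed $\theta(\dot w)\in\dot wT$ when $w$ is $\theta$-fixed, so conjugation shows $\theta(\dot wB\dot w^{-1})=\dot wB\dot w^{-1}$, and conversely the Borels containing $T$ are indexed faithfully by $W_{G}$. Every $\theta$-stable Borel is $K$-conjugate to one of these, since it contains a $\theta$-stable maximal torus whose group of $\theta$-fixed points is a maximal torus of $K$, and all such tori form a single $K$-conjugacy class, hence may be moved to $T$. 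Finally, two of the Borels $\dot wB\dot w^{-1}$ are $K$-conjugate if and only if the conjugating element may be taken in $N_{K}(T)$, whose image in $W_{G}$ is $W_{K}$ acting by left multiplication; this gives the bijection $\clko{\GFl_{B}}\leftrightarrow W_{K}\backslash W_{G}^{\theta}$.

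For a general standard parabolic $P=P_{J}$ I would use the $K$-equivariant projection $\GFl_{B}\to\GFl_{P}$, whose fibres are flag varieties of a Levi factor. It carries closed orbits onto closed orbits, and by the reformulation every closed orbit on $\GFl_{P}$ has the form $K\cdot\dot wP$ with $w\in W_{G}^{\theta}$ (the unique parabolic of the type of $P$ containing the $\theta$-stable Borel $\dot wB\dot w^{-1}$ is $\dot wP\dot w^{-1}$). Over such an orbit the fibre is the flag variety of the $\theta$-stable Levi $\dot wL_{J}\dot w^{-1}$, and applying the already-proved Borel case to this Levi shows that the closed orbits of $\GFl_{B}$ lying above $K\cdot\dot wP$ are governed by $\bigl(W_{\dot wL_{J}\dot w^{-1}}\bigr)^{\theta}\cong W_{P}^{\theta}$ (conjugation by the $\theta$-fixed $w$ commutes with $\theta$). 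Assembling the left $W_{K}$ from the Borel case with this right $W_{P}^{\theta}$ from the fibres yields the bijection $\clko{\GFl_{P}}\leftrightarrow W_{K}\backslash W_{G}^{\theta}/W_{P}^{\theta}$, with $W_{K}\dot wW_{P}^{\theta}$ corresponding to $K\dot wP$. The only routine bookkeeping left is the injectivity of this last map, which follows by tracking the fibration, while the genuine difficulty is concentrated in the split-case argument of the second paragraph.
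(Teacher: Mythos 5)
First, a point of reference: the paper contains \emph{no} proof of this statement at all --- it is quoted verbatim from Brion--Helminck \cite[Proposition~9]{Brion.Helminck.2000} --- so your argument has to stand on its own, and it does not: the failure sits exactly at the step you yourself label ``the main obstacle''. For a maximal torus $T_K$ of $K$ one has $Z_K(T_K)=T_K$, and $T_K$ is central in $M:=Z_G(T_K)$; since $(M^{\theta})_0\subseteq K\cap M=Z_K(T_K)=T_K$, the Lie algebra of $M$ satisfies $[\lie{m},\lie{m}]=[\lie{m}^{-\theta},\lie{m}^{-\theta}]\subseteq\lie{m}^{\theta}=\lie{t}_K\subseteq\lie{z}(\lie{m})$, so $\lie{m}$ is nilpotent, hence abelian, hence $M$ \emph{is} a maximal torus $T$ of $G$. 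Your ``split symmetric pair'' $\bigl(Z_G(T_K),Z_K(T_K)\bigr)$ is therefore just $(T,T_K)$: inside it your key claim (``any Borel subgroup with a $T_K$-fixed point is $\theta$-stable'') is vacuously true and says nothing about the Borel subgroup $B'$ of $G$ you are studying, while the statement you would actually need --- every Borel subgroup of $G$ containing $T_K$ is $\theta$-stable --- is \emph{false}. Counterexample: $G=\SL_2\times\SL_2$, $\theta(x,y)=(y,x)$, $K=\Delta(\SL_2)$, $T_K=\Delta(T_d)$ with $T_d$ the diagonal torus; the Borel subgroup $B_+\times B_-$ (upper times lower triangular) contains $T_K$, yet $\theta(B_+\times B_-)=B_-\times B_+$, and its $K$-orbit is the \emph{open} orbit in $\mathbb{P}^1\times\mathbb{P}^1$, not a closed one. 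The reduction to $Z_G(T_K)$ discards precisely the hypothesis that matters: closedness of the orbit gives you, via the Borel fixed point theorem, a full Borel subgroup $B_K$ of $K$ inside $B'$, not merely a maximal torus of $K$ --- and $B_+\times B_-$ above contains no Borel subgroup of $K$.

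The hard direction can be completed by actually using $\lie{b}_K\subseteq\lie{b}'$. Since $Z_G(T_K)=T$ is a torus, it lies in $B'$ (because $B'\cap Z_G(T_K)$ is a Borel subgroup of the torus $Z_G(T_K)$), every root of $(\lie{g},\lie{t})$ restricts to a nonzero weight of $\lie{t}_K$, and, since $Z_{\lie{k}}(\lie{t}_K)=\lie{t}_K$, the nonzero $\lie{t}_K$-weight spaces of $\lie{k}$ are one-dimensional root spaces of $\lie{k}$. Now suppose $\alpha$ is a root of $\lie{b}'$ such that $\theta\alpha$ is not; then $\theta\alpha\neq\alpha$, and after replacing $\alpha$ by $-\theta\alpha$ if necessary we may assume the restriction $\bar\alpha$ is a root of $\lie{b}_K$. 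For $0\neq X\in\lie{g}_{\alpha}$ the nonzero vector $X+\theta X$ spans $\lie{k}_{\bar\alpha}\subseteq\lie{b}_K\subseteq\lie{b}'$; since also $X\in\lie{b}'$, we get $\theta X\in\lie{g}_{\theta\alpha}\cap\lie{b}'=\{0\}$, a contradiction. Hence $\theta(B')=B'$, which is the reformulation you wanted. With this repaired, your parametrization of the $\theta$-stable Borel subgroups containing $T$ by $W_G^{\theta}$, and the passage from $B$ to $P_J$ via the fibration, are sound in outline; but note that the injectivity of $W_K\dot{w}W_P^{\theta}\mapsto K\dot{w}P$, which you dismiss as routine bookkeeping, is itself a genuine combinatorial step in Brion--Helminck's argument rather than a formality that falls out of tracking fibres.
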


Two remarks are in order.

First, if $ P $ is not $ \theta $-stable, 
let $ P' = P \cap \theta(P) $ be the largest $ \theta $-stable parabolic contained in $ P $.  
Then closed $ K $-orbits on $ \GFl_P $ and those on $ \GFl_{P'} $ are in bijection.

Second, 
if $ \rank G = \rank K $, we can assume $ T_K = T $ above.  
Then, clearly $ \theta $ acts on $ W_G $ as an identity.  Thus we get 
$ W_K \backslash W_G^{\theta} / W_P^{\theta} = W_K \backslash W_G / W_P $.  

We can deduce the number of closed orbits on $ \GFl_{P_1} \times \GFl_{P_2} $ immediately.

\begin{corollary}
The number of closed $ K $ orbits on $ \GFl_{P_1} \times \GFl_{P_2} $ 
is equal to $ \# W_K \backslash W_G^{\theta} / W_{P_1}^{\theta} \times 
\# W_K \backslash W_G^{\theta} / W_{P_2}^{\theta} $.  
If $ \rank G = \rank K $, it reduces to 
$ \# W_K \backslash W_G / W_{P_1} \times \# W_K \backslash W_G / W_{P_2} $.  
\end{corollary}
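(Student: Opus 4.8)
The plan is to assemble the corollary from the two classification results already established, since all the genuine work is contained in them. First I would observe that the quantity to be computed is the cardinality $\# \clko{\GFl_{P_1} \times \GFl_{P_2}}$. By Theorem~\ref{thoerem:classification.of.closed.orbits}, the map $\pi_{12}$ is a bijection onto the Cartesian product $\clko{\GFl_{P_1}} \times \clko{\GFl_{P_2}}$; since the cardinality of a product set is the product of the cardinalities of its factors, this yields
\begin{equation*}
\# \clko{\GFl_{P_1} \times \GFl_{P_2}} = \# \clko{\GFl_{P_1}} \cdot \# \clko{\GFl_{P_2}}.
\end{equation*}

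Next I would evaluate each factor using the Brion--Helminck description: closed $K$-orbits on $\GFl_{P_i}$ are in bijection with $W_K \backslash W_G^{\theta} / W_{P_i}^{\theta}$, so that $\# \clko{\GFl_{P_i}} = \# (W_K \backslash W_G^{\theta} / W_{P_i}^{\theta})$ for $i = 1, 2$. Multiplying the two counts gives the first formula. I note that this step imposes no $\theta$-stability hypothesis on the $P_i$, because $W_{P_i}^{\theta}$ is defined for an arbitrary standard parabolic; if one prefers to apply the quoted proposition only to $\theta$-stable parabolics, one replaces $P_i$ by $P_i \cap \theta(P_i)$ via the first remark, which changes neither the set of closed $K$-orbits nor the relevant double-coset count.

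For the equal-rank reduction I would invoke the second remark. When $\rank G = \rank K$ we may take $T_K = T$, so that $\theta$ acts trivially on $W_G$; hence $W_G^{\theta} = W_G$ and $W_{P_i}^{\theta} = W_{P_i}$, and the double cosets simplify to $W_K \backslash W_G / W_{P_i}$. Substituting these into the first formula yields the second.

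The argument is essentially formal, so I do not expect a genuine obstacle. The only point deserving a word of care is the passage from the set-theoretic bijection $\pi_{12}$ to the multiplicativity of the counts, which is immediate once one recalls that $\pi_{12}$ takes values in a Cartesian product; everything else is a direct substitution from the two cited theorems, so I expect the proof to occupy only a line or two.
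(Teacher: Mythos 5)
Your proof is correct and follows exactly the route the paper intends: the paper states this corollary as an immediate consequence of Theorem~\ref{thoerem:classification.of.closed.orbits} (the bijection $\pi_{12}$) combined with the Brion--Helminck count $\#\clko{\GFl_{P_i}} = \#\, W_K \backslash W_G^{\theta} / W_{P_i}^{\theta}$ and the second remark for the equal-rank case. Your extra care about non-$\theta$-stable parabolics via the first remark is a sensible touch but matches the paper's setup rather than departing from it.
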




\def\cftil#1{\ifmmode\setbox7\hbox{$\accent"5E#1$}\else
  \setbox7\hbox{\accent"5E#1}\penalty 10000\relax\fi\raise 1\ht7
  \hbox{\lower1.15ex\hbox to 1\wd7{\hss\accent"7E\hss}}\penalty 10000
  \hskip-1\wd7\penalty 10000\box7} \def\cprime{$'$}
\providecommand{\bysame}{\leavevmode\hbox to3em{\hrulefill}\thinspace}
\providecommand{\MR}{\relax\ifhmode\unskip\space\fi MR }
\providecommand{\MRhref}[2]{%
  \href{http://www.ams.org/mathscinet-getitem?mr=#1}{#2}
}
\providecommand{\href}[2]{#2}

\end{document}